\newcommand{\ds}{\displaystyle}
\newcommand{\mult}{\operatorname{mult}}
\theoremstyle{plain}
\newtheorem{theorem}{Theorem}
\newtheorem{lemma}[theorem]{Lemma}
\theoremstyle{definition}
\newtheorem*{example}{Example}
\theoremstyle{remark}
\begin{document}

\title{Computing polynomial conformal models for low-degree Blaschke products}

\date{January 19, 2018}

\author[T. Richards]{Trevor Richards}
\address{Department of Mathematics and Computer Science, Monmouth College, Monmouth, IL 61462, USA.}
\email{trichards@monmouthcollege.edu}
\author[M. Younsi]{Malik Younsi}
\thanks{Second author supported by NSF Grant DMS-1758295}
\address{Department of Mathematics, University of Hawaii Manoa, Honolulu, HI 96822, USA.}
\email{malik.younsi@gmail.com}

\keywords{Conformal models, Blaschke products, polynomials, fingerprints.}
\subjclass[2010]{primary 30J10; secondary 30C35, 30C10.}

\begin{abstract}
For any finite Blaschke product $B$, there is an injective analytic map $\varphi:\mathbb{D}\to\mathbb{C}$ and a polynomial $p$ of the same degree as $B$ such that $B=p\circ\varphi$ on $\mathbb{D}$. Several proofs of this result have been given over the past several years, using fundamentally different methods. However, even for low-degree Blaschke products, no method has hitherto been developed to explicitly compute the polynomial $p$ or the associated conformal map $\varphi$.  In this paper, we show how these functions may be computed for a Blaschke product of degree at most three, as well as for Blaschke products of arbitrary degree whose zeros are equally spaced on a circle centered at the origin.
\end{abstract}

\maketitle

\section{Introduction}

For domains $D,E\subset\mathbb{C}$ and analytic functions $f:D\to\mathbb{C}$, $g:E\to\mathbb{C}$, we say that $g$ on $E$ is a conformal model for $f$ on $D$ if there is some analytic bijection $\varphi:D\to E$ such that $f=g\circ\varphi$ on $D$.  By precomposing both sides of this equation by $\varphi^{-1}:E\to D$, it follows immediately that $f$ on $D$ is a conformal model for $g$ on $E$.  In this case, we say that the pairs $(f,D)$ and $(g,E)$ are conformally equivalent, and it is easy to see that this defines an equivalence relation on the set of pairs of the form $(f,D)$.

There has been significant interest in recent years in the ``Polynomial Conformal Modeling Question'' (PCMQ), which asks whether a given pair $(f,D)$ has any conformal model $(g,E)$ for which the function $g$ is a polynomial.  A positive answer is known for the PCMQ when the domain $D$ is the unit disk $\mathbb{D}$ and the function $f$ is a finite Blaschke product.  In this case, the polynomial conformal model may be assumed to have the same degree as $f$. In the following, we use the notation $\mathbb{D}_p=\{z\in\mathbb{C}:|p(z)|<1\}$ for any polynomial $p\in\mathbb{C}[z]$.

\begin{theorem}\label{thm: PCMQ for finite Blaschke products.}
For any finite Blaschke product $B$, there is a polynomial $p$ of the same degree as $B$ such that $p$ on $\mathbb{D}_p$ is a conformal model for $B$ on $\mathbb{D}$.
\end{theorem}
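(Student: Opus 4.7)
The plan is to construct the polynomial $p$ (and the conformal map $\varphi$) via conformal welding applied to an $n$-th root of the boundary values of $B$. Let $n = \deg B$. The Blaschke product $B$ extends analytically across $\partial\mathbb{D}$ and restricts there to a real-analytic covering map of degree $n$. Writing $B(e^{i\theta}) = e^{i\beta(\theta)}$ with $\beta$ smooth, strictly increasing, and satisfying $\beta(\theta+2\pi) = \beta(\theta) + 2\pi n$, the formula $k(e^{i\theta}) := e^{i\beta(\theta)/n}$ defines a real-analytic diffeomorphism $k\colon\partial\mathbb{D}\to\partial\mathbb{D}$ with $k^n = B$ on $\partial\mathbb{D}$. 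This $k$ will play the role of ``fingerprint data'' out of which $p$ is manufactured.

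The next step is to apply the conformal welding theorem to the quasisymmetric (in fact real-analytic) diffeomorphism $k$: there exists a smooth Jordan curve $\gamma\subset\mathbb{C}$, with bounded interior $\Omega_-$ and unbounded exterior $\Omega_+$, together with Riemann maps $\phi_-\colon\mathbb{D}\to\Omega_-$ and $\phi_+\colon\hat{\mathbb{C}}\setminus\overline{\mathbb{D}}\to\Omega_+$, normalized so that $\phi_+(\infty)=\infty$, satisfying $\phi_+^{-1}\circ\phi_- = k$ on $\partial\mathbb{D}$. Smoothness of $\gamma$ (inherited from the analyticity of $k$) guarantees that $\phi_\pm$ extend homeomorphically to the closures, so the welding identity makes sense pointwise on $\gamma$.

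I then define $p\colon\mathbb{C}\to\mathbb{C}$ piecewise by
\[
p(w) = \begin{cases} B(\phi_-^{-1}(w)), & w\in\overline{\Omega_-}, \\ (\phi_+^{-1}(w))^n, & w\in\overline{\Omega_+}. \end{cases}
\]
The identity $k^n = B$ forces the two pieces to agree on $\gamma$, so $p$ is continuous on $\mathbb{C}$; being holomorphic on $\Omega_-$ and on $\Omega_+$ separately, Morera's theorem makes $p$ entire. The normalization $\phi_+(\infty)=\infty$ implies that $\phi_+^{-1}$ has a simple pole at $\infty$, so $p$ has a pole of order exactly $n$ there and is therefore a polynomial of degree $n$. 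Since $|p|<1$ on $\Omega_-$, $|p|=1$ on $\gamma$, and $|p|>1$ on $\Omega_+$, we have $\mathbb{D}_p = \Omega_-$, and $\varphi := \phi_-$ is the desired injective analytic map $\mathbb{D}\to\mathbb{D}_p$ with $p\circ\varphi = B$.

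The main obstacle is the welding step itself: for quasisymmetric data, conformal welding is established via quasiconformal extension of $k$ to $\mathbb{D}$ followed by the measurable Riemann mapping theorem, and a boundary-regularity argument upgrades the real-analytic fingerprint $k$ to a smooth curve $\gamma$ and hence to boundary-continuous $\phi_\pm$. A secondary technical point is the removability argument used to promote the pasted map $p$ from merely continuous to entire, which is standard once $\gamma$ is known to be a smooth Jordan curve. Beyond these analytic inputs, the remaining verifications (the degree count at $\infty$, the identification of $\mathbb{D}_p$ with $\Omega_-$) are routine.
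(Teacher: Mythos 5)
Your argument is correct and is essentially the conformal welding proof of Theorem~\ref{thm: PCMQ for finite Blaschke products.} from \cite{Y} (see also \cite{EKS} and \cite{RY}): take an $n$-th root of $B$ on $\partial\mathbb{D}$ to get a real-analytic welding homeomorphism $k$, weld to obtain $\gamma$, $\phi_\pm$, and paste $B\circ\phi_-^{-1}$ with $(\phi_+^{-1})^n$. The paper itself does not reprove this theorem but cites exactly this approach among others, and your handling of the two technical points (welding of an analytic diffeomorphism yielding an analytic curve, and removability of $\gamma$ to make the pasted map entire) is the standard and correct one.
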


Theorem~\ref{thm: PCMQ for finite Blaschke products.} has seen several proofs in recent years using a variety of approaches. We mention

\begin{itemize}
    \item the characterization of fingerprints of polynomial lemniscates obtained by Ebenfelt et. al.~\cite{EKS} in view of applications to computer vision, which has Theorem~\ref{thm: PCMQ for finite Blaschke products.} as a corollary;
    \item the proof of the first author \cite{R1} using critical level curve configurations;
    \item the proof of the second author \cite{Y} using conformal welding.

\end{itemize}

Theorem~\ref{thm: PCMQ for finite Blaschke products.} was further generalized in \cite{R2} to arbitrary functions analytic on the closed unit disk. As far as we know, the most general version is a solution on the online mathematics forum \textit{math.stackexchange.com} by Lowther and Speyer~\cite{L,S} showing that the PCMQ has a positive answer as long as the domain $D$ is bounded and the function $f$ is analytic on the closure of $D$. This solution relies on approximation by polynomials interpolating certain derivative data, and can readily be generalized to meromorphic functions $f$, in which case polynomials need to be replaced by rational maps. Also of interest on this topic is the paper of the authors~\cite{RY} which again brings the tools of conformal welding to bear on the PCMQ, also addressing the question of the degree of the polynomial conformal model in more detail.

Conspicuously absent in the aforementioned proofs of Theorem~\ref{thm: PCMQ for finite Blaschke products.} is anything of a constructive nature. In his Master's Thesis \cite{FB}, Maxime Fortier Bourque asked whether there are explicit formulas relating the Blaschke product $B$ and its polynomial conformal model $p$. In this paper, we present a method for computing a polynomial conformal model as well as obtaining an algebraic formula for its associated conformal map in two cases : first for finite Blaschke products of degree at most three, in Section~\ref{sect: Degree three and below.}, and then for finite Blaschke products of arbitrary degree whose zeros are equally spaced on a circle centered at the origin, in Section~\ref{sect: Uniformly distributed zeros.}.

The work in Sections~\ref{sect: Degree three and below.}~and~\ref{sect: Uniformly distributed zeros.} requires several lemmas, which are proved in Section~\ref{sect: Proofs of lemmas.}.

\section{The Polynomial Conformal Model for a finite Blaschke product of degree at most three}
\label{sect: Degree three and below.}

First, observe that if $B$ is a degree one finite Blaschke product, then $B$ itself is injective on $\mathbb{D}$, so that we may write $B$ as $B=p\circ\varphi$, where the polynomial is $p(z)=z$ and the conformal map is $\varphi(z)=B(z)$.

If $B$ has degree two, then by precomposing $B$ with a disk automorphism sending $0$ to the critical point of $B$, we may assume without loss of generality that the two zeros of $B$ are symmetric with respect to the origin (this follows from Lemma~\ref{lem: Zeros uniform about origin.}). In other words, we may assume that $B$ falls in the case treated in Section~\ref{sect: Uniformly distributed zeros.}.

Consequently, it only remains to treat the case of a finite Blaschke product $B$ with $\deg(B)=3$. In this case, the derivative of $B$ is a rational function with at most four zeros, two of which lie in $\mathbb{D}$. The critical points of $B$ may hence be computed by means of the quartic formula (see for example~\cite{N1}).

Let $z_1$ and $z_2$ be the two critical points of $B$ in $\mathbb{D}$, and set $k_1=B(z_1)$ and $k_2=B(z_2)$. Note that if $p$ on $\mathbb{D}_p$ is a conformal model for $B$ on $\mathbb{D}$, then $p$ must have $k_1$ and $k_2$ as critical values as well. The converse also holds, as shown in the following lemma.

\begin{lemma}
\label{lem: Same critical values.}
Let $B$ be a degree three Blaschke product whose critical points in $\mathbb{D}$ are $z_1$ and $z_2$. If $p\in\mathbb{C}[z]$ is any degree three polynomial whose critical values are $B(z_1)$ and $B(z_2)$, then $p$ on $\mathbb{D}_p$ is a conformal model for $B$ on $\mathbb{D}$.
\end{lemma}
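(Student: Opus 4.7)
The plan is to recognize $B:\mathbb{D}\to\mathbb{D}$ and $p:\mathbb{D}_p\to\mathbb{D}$ as proper holomorphic $3$-fold branched covers that share the same branch data, and to invoke the classification of covering spaces to produce the biholomorphism $\varphi$ with $B = p\circ\varphi$. The first step is to verify the setting for $p$: since the prescribed critical values $k_1 = B(z_1)$ and $k_2 = B(z_2)$ both lie in $\mathbb{D}$, every critical value of $p$ has modulus strictly less than $1$, and one checks that this forces $\mathbb{D}_p$ to be a bounded connected domain on which $p$ restricts to a proper holomorphic map of degree $3$. By the Riemann--Hurwitz formula applied to each map (or directly by counting zeros of $B'$ and $p'$), both $B$ on $\mathbb{D}$ and $p$ on $\mathbb{D}_p$ have exactly two simple critical points, whose critical values are precisely $k_1$ and $k_2$.

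Assume first that $k_1 \neq k_2$ and set $W = \mathbb{D}\setminus\{k_1,k_2\}$. Then $B:B^{-1}(W)\to W$ and $p:p^{-1}(W)\to W$ are unbranched holomorphic $3$-fold covers, and they are isomorphic as covers if and only if their monodromy homomorphisms $\pi_1(W)\to S_3$ are conjugate. Since $\pi_1(W)$ is free on two generators $\gamma_1,\gamma_2$ (small loops around $k_1$ and $k_2$), and each critical value has a unique simple preimage, each $\gamma_i$ is sent to a transposition under either monodromy. The two transpositions must be distinct, for otherwise the generated subgroup, which is $\mathbb{Z}/2$, would fail to act transitively on the three sheets, contradicting the connectedness of $B^{-1}(W)$ and $p^{-1}(W)$. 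There are exactly $6 = |S_3|$ ordered pairs of distinct transpositions in $S_3$, and the conjugation action of $S_3$ on the set of transpositions is faithful, so the induced action on such ordered pairs is simply transitive. Hence the two monodromies are conjugate, and there exists a biholomorphism $\varphi_0:B^{-1}(W)\to p^{-1}(W)$ with $p\circ\varphi_0 = B$.

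Because $\varphi_0$ takes values in the bounded set $\mathbb{D}_p$, Riemann's removable singularity theorem extends it holomorphically across the finite set $B^{-1}\{k_1,k_2\}$ to a map $\varphi:\mathbb{D}\to\mathbb{C}$. The identity $p\circ\varphi = B$ persists by continuity, and since $B(\mathbb{D})\subset\mathbb{D}$ we have $\varphi(\mathbb{D})\subset\mathbb{D}_p$. A standard properness argument shows $\varphi:\mathbb{D}\to\mathbb{D}_p$ is proper, and then $\deg(p)\deg(\varphi) = \deg(B) = 3 = \deg(p)$ forces $\deg(\varphi)=1$, so $\varphi$ is an analytic bijection. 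The degenerate case $k_1 = k_2$ is handled identically with $W = \mathbb{D}\setminus\{k_1\}$ and $\pi_1(W)\cong\mathbb{Z}$: the generator must then be sent to a transitive element of $S_3$, hence to a $3$-cycle (reflecting the ramification at a single double critical point), and any two $3$-cycles in $S_3$ are conjugate. I expect the main obstacle to lie not in a single conceptual step but in the bookkeeping around $p$: establishing that $\mathbb{D}_p$ is connected, that both critical points of $p$ actually lie in $\mathbb{D}_p$, and that the ramification data really matches that of $B$ before the covering-space machinery can be turned on.
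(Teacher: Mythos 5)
Your argument is correct, but it is a genuinely different proof from the one in the paper. The paper's proof is essentially two citations: it invokes Theorem~\ref{thm: PCMQ for finite Blaschke products.} to produce \emph{some} degree three polynomial conformal model $\widehat{p}$ for $B$ (whose critical values must then be $B(z_1)$ and $B(z_2)$), and then the Beardon--Carne--Ng classification of cubic polynomials by their critical values to conclude that any other cubic $p$ with the same critical values is $\widehat{p}$ precomposed with a linear map, hence conformally equivalent to it; transitivity of conformal equivalence finishes the job. You instead build the conformal map directly: you puncture $\mathbb{D}$ at the critical values, observe that $B$ and $p$ restrict to connected $3$-fold unbranched covers whose monodromies send the two generators of $\pi_1$ to distinct transpositions (distinctness forced by transitivity of the monodromy group), use the simple transitivity of $S_3$ on ordered pairs of distinct transpositions to conjugate one monodromy to the other, lift to a cover isomorphism, and remove the singularities. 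This buys you a self-contained proof that does not rely on Theorem~\ref{thm: PCMQ for finite Blaschke products.} as a black box nor on the Beardon--Carne--Ng result --- in effect you reprove the degree-three case of Theorem~\ref{thm: PCMQ for finite Blaschke products.} and, along the way, a portion of Lemma~\ref{lem: Analytic implies injective.} (your final properness-and-degree step is essentially that lemma, and you could cite it instead). The cost is the covering-space and monodromy machinery, plus the bookkeeping you correctly flag: connectedness of $\mathbb{D}_p$ (which the paper also needs, via \cite[Proposition 2.1]{EKS}) and the fact that a degree three map cannot have two distinct critical points over a single critical value, which is what guarantees the cycle types you assign to the monodromy generators in both the generic and the degenerate case. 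One small point of precision: faithfulness of the conjugation action on transpositions does not by itself give simple transitivity on ordered pairs; what you actually want is that this action is the standard action of $S_3$ on three letters, which is $2$-transitive, and transitivity on ordered pairs of distinct transpositions is all the argument requires.
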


\begin{proof}
By Theorem~\ref{thm: PCMQ for finite Blaschke products.}, there is a degree three polynomial $\widehat{p}$ such that $\widehat{p}$ on $\mathbb{D}_{\widehat{p}}$ is a conformal model for $B$ on $\mathbb{D}$. As previously mentioned, it follows that the critical values of $\widehat{p}$ are precisely $B(z_1)$ and $B(z_2)$. On the other hand, there is exactly one degree three polynomial with any two given critical values, modulo precomposition with a linear map (see \cite{BCN}). Since $p$ and $\widehat{p}$ have the same critical values, it immediately follows that $(p,\mathbb{D}_p)$ is conformally equivalent to $(\widehat{p},\mathbb{D}_{\widehat{p}})$.  Finally, by the transitivity of conformal equivalence, we conclude that $p$ on $\mathbb{D}_p$ is a conformal model for $B$ on $\mathbb{D}$.

\end{proof}

Having proved Lemma~\ref{lem: Same critical values.}, it remains to show how to compute a degree three polynomial with two prescribed critical values.

If $p$ is the desired polynomial, then for any $\alpha,\beta\in\mathbb{C}$ with $\alpha\neq 0$, the polynomial $\widehat{p}(z)=p(\alpha z+\beta)$ has the same critical values as $p$.

Note that $\widehat{p}$ on $\mathbb{D}_{\widehat{p}}$ is a conformal model for $p$ on $\mathbb{D}_p$, hence is also a conformal model for $B$ on $\mathbb{D}$, again by transitivity. Moreover, one can easily check that by making an appropriate choice of $\alpha$ and $\beta$, we may choose $\widehat{p}$ to be of the form $$\widehat{p}(z)=z^3+cz+d$$ for some $c,d\in\mathbb{C}$. Let us simply replace $p$ with this new conformal model, using the letter $p$ to denote $\widehat{p}$. We now wish to compute $c$ and $d$ from the prescribed critical values of $p$, namely $k_1$ and $k_2$.

The critical points of $p$ are the two square roots of $\dfrac{-c}{3}$. Let $\pm z_1$ denote these two roots.  If $+z_1=-z_1$, then $c=0$, in which case $p(z)=z^3+d$. The derivative of such a polynomial has a zero of order two, so the same must be true for $B$ as well. By precomposing $B$ with the appropriate disk automorphism, we may assume that the double critical point of $B$ is at the origin, so that by Lemma~\ref{lem: Zeros uniform about origin.}, the zeros of $B$ are equally spaced on a circle centered at the origin. This case being treated in Section~\ref{sect: Uniformly distributed zeros.}, we shall henceforth assume that $+z_1\neq-z_1$.

Substituting these roots back into $p$ and setting the result equal to $k_1$ and $k_2$ respectively, we obtain the equations $$k_1={z_1}^3+cz_1+d\text{ and }k_2=(-z_1)^3+c(-z_1)+d.$$  The solutions $c$ and $d$ to this system of equations are easily found to be $$c=-3\left(\dfrac{k_2-k_1}{4}\right)^{2/3}\text{ and }d=\dfrac{k_1+k_2}{2},$$ for any choice of the third root in the equation for $c$.  In order to simplify the notation, we leave this as simply $c$ and $d$ in what follows, keeping in mind that these quantities are computed in terms of $k_1$ and $k_2$. With these values of $c$ and $d$, the polynomial $p(z)=z^3+cz+d$ has critical values $k_1$ and $k_2$ and therefore is a conformal model for $B$ on $\mathbb{D}$, by Lemma~\ref{lem: Same critical values.}

Having found the polynomial conformal model $p$, we can now obtain a formula for the corresponding conformal map $\varphi:\mathbb{D}\to \mathbb{D}_p$ satisfying $B=p\circ\varphi$. In order to do so, we treat the equation $B=p\circ\varphi$ as a polynomial equation in the variable $\varphi$, with coefficients in the ring of rational functions in $z$. In other words, the function $\varphi$ we are looking for is a solution to the equation $$0=\varphi^3+c\varphi+(d-B).$$  The cubic formula (see again~\cite{N1}) now implies that $\varphi$ has the form
$$\varphi=U+V,$$
where $$U=\sqrt[3]{-\dfrac{d-B}{2}+\sqrt{\dfrac{(d-B)^2}{4}+\dfrac{c^3}{27}}}\text{ and }V=\sqrt[3]{-\dfrac{d-B}{2}-\sqrt{\dfrac{(d-B)^2}{4}+\dfrac{c^3}{27}}}.$$  In the above formulas, the same choice must be made for the square roots in $U$ and in $V$, while the cubic roots must and can be chosen to ensure that $UV=\dfrac{-c}{3}$.

These constraints still leave three possible solutions $\varphi$, corresponding to the three possible choices for the cubic roots in the expressions for $U$ and $V$. Each of these solutions satisfies the equation $B=p\circ\varphi$ in $\mathbb{D}$, although in general only one is analytic in the disk. Indeed, in order to see this, suppose that the two critical points of $B$ are distinct, which is the case of interest, and let $\varphi_1,\varphi_2:\mathbb{D}\to\mathbb{C}$ be two analytic maps satisfying $B=p\circ\varphi_1=p\circ\varphi_2$. According to Lemma~\ref{lem: Analytic implies injective.}, both $\varphi_1$ and $\varphi_2$ are injective on $\mathbb{D}$, and $\varphi_1(\mathbb{D})=\varphi_2(\mathbb{D})=\mathbb{D}_p$.

Now, we have $B\circ{\varphi_1}^{-1}=B\circ{\varphi_2}^{-1}$ on $\mathbb{D}_p$, which implies that $\psi={\varphi_2}^{-1}\circ\varphi_1$ is a disk automorphism satisfying $B=B\circ\psi$ on $\mathbb{D}$. But then $\psi$ must be the identity, in view of Lemma~\ref{lem: Automorphism of disk is identity.}, so that $\varphi_1=\varphi_2$ on $\mathbb{D}$. We conclude that at most one of the three solutions to the equation
$$0=\varphi^3+c\varphi+(d-B)$$
will be analytic.

In practice, determining which of the three choices of $\varphi$ is analytic may be quite difficult, mostly due to the complicated nature of the formulas involved. We illustrate this by the following example.

\begin{figure}[!ht]
\begin{minipage}[b]{0.45\linewidth}
\centering
\includegraphics[width=\textwidth]{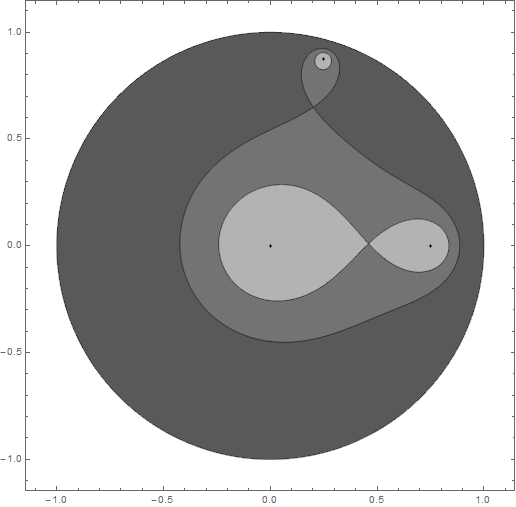}
\caption{The critical level curves of $B$.}
\label{fig: Level curves of B.}
\end{minipage}
\hspace{0.5cm}
\begin{minipage}[b]{0.45\linewidth}
\centering
\includegraphics[width=\textwidth]{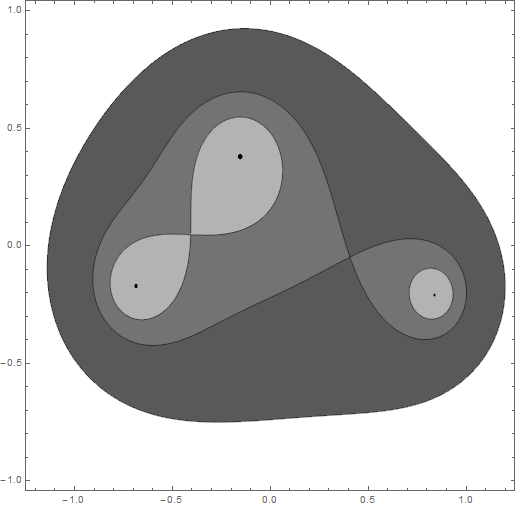}
\caption{The critical level curves of $p$.}
\label{fig: Level curves of p.}
\end{minipage}
\end{figure}

\begin{example}
Consider the finite Blaschke product $$B(z)=z\dfrac{(z-3/4)(z-(1/4+7i/8))}{(1-(3/4)z)(1-(1/4-7i/8)z)},$$ which has zeros at $0 ,1/2$ and $1/4+7i/8$. The derivative of $B$ is a degree four rational function, whose zeros can in principle be computed using the quartic formula. The closed forms for these critical points are too involved to display here, so we simply mention that those in the disk are approximately $z_1=0.2014+0.6494i$ and $z_2=0.4599+0.0103i$. The critical level curves of $B$ are displayed in Figure~\ref{fig: Level curves of B.}. The outer boundary of the shaded region is the unit circle.

The corresponding critical values are $k_1=B(z_1)$ and $k_2=B(z_2)$. Recall that by Lemma \ref{lem: Same critical values.}, there is a unique conformal equivalence class of degree three polynomials with two prescribed critical values, and any degree three polynomial $p$ having $k_1$ and $k_2$ as critical values is a polynomial conformal model on $\mathbb{D}_p$ for $B$ on $\mathbb{D}$.

The previous work shows that the polynomial $p(z)=z^3+cz+d$ (where $c=-3\left(\frac{k_2-k_1}{4}\right)^{2/3}$ and $d=\frac{k_1+k_2}{2}$) has critical values $k_1$ and $k_2$, so that $p$ on $\mathbb{D}_p$ is a conformal model for $B$ on $\mathbb{D}$.  Furthermore, as discussed above, the conformal map $\varphi:\mathbb{D}\to\mathbb{D}_p$ satisfying the equation $B=p\circ\varphi$ is defined by one of the three algebraic formulas obtained by applying the cubic formulas to the equation $B=p\circ\varphi$ written in the form $$\varphi^3+c\varphi+(d-B)=0$$ (again viewed as an equation in the unknown $\varphi$, with coefficients in the ring of rational functions in $z$).  The complexity of these formulas makes it difficult to determine precisely which of the three solutions is conformal on the unit disk (and therefore is truly the one we are looking for).

For comparison sake, we also display the critical level curves of the polynomial conformal model $p$ in Figure~\ref{fig: Level curves of p.}. The outer boundary of the shaded region is the set $\{z:|p(z)|=1\}$. The function $\varphi$ maps the lightest regions in Figure~\ref{fig: Level curves of B.} to the corresponding lightest regions in Figure~\ref{fig: Level curves of p.}, and so forth.

\end{example}

\section{The Polynomial Conformal Model for a finite Blaschke product with equally spaced zeros}
\label{sect: Uniformly distributed zeros.}

Let $B$ denote a degree $n\geq2$ Blaschke product whose zeros are equally spaced on a circle centered at $0$, i.e.
$$B(z)=\lambda\dfrac{z^n-c^n}{1-\bar{c}^nz^n}$$
for some $\lambda\in\mathbb{C}$ with $|\lambda|=1$ and some $c \in \mathbb{D}$.

Define $\varphi(z)=\dfrac{e^{i\pi/n}z}{\sqrt[n]{1-\bar{c}^nz^n}}$.  By Lemma~\ref{lem: varphi is injective analytic.}, the map $\varphi$ is analytic and injective on $\mathbb{D}$. A straightforward calculation shows that for $z\in\mathbb{D}$, $$B(\varphi(z))=\lambda\left(|c|^{2n}-1\right)z^n-\lambda c^n.$$
It follows that the polynomial $p(z)=\lambda(|c|^{2n}-1)z^n-\lambda c^n$ on the set $\varphi(\mathbb{D})$ is a polynomial conformal model for $B$ on $\mathbb{D}$. 

We remark that the formula for $p$ also appears in \cite[Section 4.6.2]{FB}, as mentioned to the authors by Maxime Fortier Bourque.

\section{Proofs of the lemmas}
\label{sect: Proofs of lemmas.}

\begin{lemma}\label{lem: Zeros uniform about origin.}
If the critical points of a finite Blaschke product $B$ of degree $n \geq 2$ are all at the origin, then the zeros of $B$ are equally spaced on a circle centered at the origin.
\end{lemma}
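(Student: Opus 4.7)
The plan is to reduce to the case where $B$ vanishes at the origin, at which point the rigidity of Blaschke products forces $B$ to have an extremely simple form. Let $w_0=B(0)$ and consider the postcomposition with the disk automorphism sending $w_0$ to $0$,
\[
T(z)=\frac{B(z)-w_0}{1-\overline{w_0}\,B(z)}.
\]
Since $w\mapsto (w-w_0)/(1-\overline{w_0} w)$ is a disk automorphism, $T$ is again a finite Blaschke product of degree $n$, and by construction $T(0)=0$.

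Next, I would differentiate using the quotient rule to obtain
\[
T'(z)=\frac{(1-|w_0|^2)\,B'(z)}{(1-\overline{w_0}B(z))^2}.
\]
Because $|B|<1$ on $\mathbb{D}$, the denominator is nonvanishing there, so $T'$ and $B'$ have precisely the same zeros in $\mathbb{D}$ with the same multiplicities. By hypothesis the only critical point of $B$ in $\mathbb{D}$ is the origin, of multiplicity $n-1$; combined with $T(0)=0$, this forces $T$ to vanish at $0$ to order $n$. Since a degree-$n$ finite Blaschke product has exactly $n$ zeros in $\mathbb{D}$ counted with multiplicity, all of $T$'s zeros coincide at the origin, and therefore
\[
T(z)=\mu z^n
\]
for some unimodular constant $\mu$.

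Finally, I would invert the Möbius relation $T(z)=\mu z^n$ and solve for $B$, yielding
\[
B(z)=\frac{\mu z^n+w_0}{1+\mu \overline{w_0}\,z^n}.
\]
The zeros of $B$ are then the $n$-th roots of $-w_0/\mu$, which are equally spaced on the circle of radius $|w_0|^{1/n}$ centered at the origin (degenerating to all zeros at $0$ if $w_0=0$, which is consistent with the conclusion).

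The main step deserving care is the derivative identity together with the observation that $1-\overline{w_0}B$ is zero-free in $\mathbb{D}$; this is what lets the multiplicity at the origin be transferred from $B'$ to $T'$ without loss, and hence to the vanishing order of $T$ itself. The rest is either a standard fact about finite Blaschke products (a degree-$n$ Blaschke product has exactly $n$ zeros in $\mathbb{D}$) or the routine algebra of inverting a Möbius transformation.
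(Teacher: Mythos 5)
Your proof is correct, and it arrives at the same structural conclusion as the paper --- namely that $B=\tau\circ(\mu z^n)$ for a disk automorphism $\tau$ --- but by a genuinely more self-contained route. The paper simply observes that $B$ and $\widehat{B}(z)=z^n$ have the same critical points in $\mathbb{D}$ and invokes Zakeri's rigidity theorem (two finite Blaschke products of equal degree with the same critical points agree up to postcomposition by a disk automorphism) to get $B=\tau\circ\widehat{B}$; the zeros of $B$ are then the solutions of $z^n=\tau^{-1}(0)$. You instead build the automorphism explicitly (the one sending $w_0=B(0)$ to $0$), and prove the needed special case of the rigidity statement by hand: the derivative identity shows $T'$ and $B'$ share zeros in $\mathbb{D}$, so $T$ vanishes to order $n$ at the origin, and zero-counting forces $T(z)=\mu z^n$. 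This buys independence from the citation at the cost of leaning on one standard fact you should make explicit: that a degree-$n$ finite Blaschke product has exactly $n-1$ critical points in $\mathbb{D}$ counted with multiplicity (so that ``all critical points at the origin'' really means $B'$ vanishes there to order $n-1$), and exactly $n$ zeros in $\mathbb{D}$ counted with multiplicity. Both facts are classical (and are also implicitly needed for the paper's appeal to Zakeri to parse), so this is a matter of attribution rather than a gap; your explicit final formula $B(z)=(\mu z^n+w_0)/(1+\mu\overline{w_0}z^n)$ is a small bonus the paper does not record.
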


\begin{proof}

Let $\widehat{B}(z)=z^n$. Then $B$ and $\widehat{B}$ have the same critical points in $\mathbb{D}$, so there exists a disk automorphism $\tau: \mathbb{D} \to \mathbb{D}$ such that $B=\tau \circ \widehat{B}$ (see e.g. \cite{ZAK}). This implies that $B(z)=0$ if and only if $z^n=\tau^{-1}(0)$, so that the zeros of $B$ are indeed equally spaced on a circle centered at $0$.

\end{proof}

\begin{lemma}\label{lem: Analytic implies injective.}
Let $B$ be a finite Blaschke product and let $p$ be a polynomial with critical values all in $\mathbb{D}$. If $\deg(B)=\deg(p)$, then any analytic function $\varphi:\mathbb{D}\to\mathbb{C}$ such that $B=p\circ\varphi$ on $\mathbb{D}$ is injective on $\mathbb{D}$ and satisfies $\varphi(\mathbb{D})=\mathbb{D}_p$.
\end{lemma}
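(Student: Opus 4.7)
The plan is to show that $\varphi$ is a proper holomorphic map from $\mathbb{D}$ onto $\mathbb{D}_p$ of degree one, hence automatically a biholomorphism. The inclusion $\varphi(\mathbb{D})\subseteq\mathbb{D}_p$ is immediate since $|p(\varphi(z))|=|B(z)|<1$, and $\varphi:\mathbb{D}\to\mathbb{D}_p$ is proper because for any compact $K\subseteq\mathbb{D}_p$ one has $\varphi^{-1}(K)\subseteq B^{-1}(p(K))$, the latter being a compact subset of $\mathbb{D}$ by properness of $B$.

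Since $\varphi(\mathbb{D})$ is connected, it lies in a single connected component $\Omega$ of $\mathbb{D}_p$. Both $\varphi:\mathbb{D}\to\Omega$ and $p|_\Omega:\Omega\to\mathbb{D}$ are proper holomorphic maps between plane domains, of some degrees $d$ and $m$ respectively, with $md=\deg B=n$ by multiplicativity of degree under composition. It therefore suffices to prove $m=n$, or equivalently that $\Omega=\mathbb{D}_p$, i.e., that $\mathbb{D}_p$ is connected.

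The main obstacle is this connectedness claim. The plan is to exploit the hypothesis that the critical values of $p$ all lie in $\mathbb{D}$: this makes $1$ a regular value of $|p|$, so $\{|p|=1\}$ is a disjoint union of finitely many smooth Jordan curves $C_1,\dots,C_\ell$, each enclosing a Jordan disk $D_j$. The maximum modulus and open mapping principles force $|p|<1$ strictly on each $\operatorname{int}(D_j)$, which prevents the $D_j$ from being nested: a strictly smaller $C_i$ contained in $\operatorname{int}(D_j)$ would simultaneously carry $|p|=1$ and $|p|<1$. Hence the $D_j$ are pairwise disjoint, and the connected components of $\mathbb{D}_p$ are exactly the simply connected Jordan domains $\operatorname{int}(D_j)$. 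Applying Riemann--Hurwitz to the proper holomorphic map $p|_{\operatorname{int}(D_j)}:\operatorname{int}(D_j)\to\mathbb{D}$ of degree $m_j$ (both domains having Euler characteristic $1$) gives $m_j-1$ critical points in $\operatorname{int}(D_j)$, counted with multiplicity; summing yields a total of $n-\ell$. But $p'$ has degree $n-1$, and every critical point of $p$ lies in $\mathbb{D}_p$ by the hypothesis on critical values, so the total must equal $n-1$, forcing $\ell=1$.

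With $\mathbb{D}_p$ now known to be connected, $m=n$ and hence $d=1$. Since any proper holomorphic map of degree one between plane domains is a biholomorphism (degree one rules out ramification, so the map is an injective local biholomorphism, and properness gives surjectivity), $\varphi$ is injective with $\varphi(\mathbb{D})=\mathbb{D}_p$.
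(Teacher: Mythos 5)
Your proof is correct, but it follows a genuinely different route from the paper's. The paper simply \emph{cites} the connectedness of $\mathbb{D}_p$ (as \cite[Proposition 2.1]{EKS}), then proves $\varphi(\mathbb{D})=\mathbb{D}_p$ by showing $\varphi(\mathbb{D})$ is open and closed in $\mathbb{D}_p$ (the closedness via a sequence/compactness argument using $|B|\to 1$ at $\partial\mathbb{D}$), and finally proves injectivity by a direct fiber count: if $\varphi$ identified two points, the fiber of $B$ over a suitable value $w$ would have total multiplicity strictly exceeding $\deg(p)$. You instead package everything in the language of proper holomorphic maps: $\varphi$ is proper into $\mathbb{D}_p$ because $\varphi^{-1}(K)\subseteq B^{-1}(p(K))$, and then injectivity and surjectivity drop out simultaneously from multiplicativity of degree once you know $\mathbb{D}_p$ is connected. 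The most substantial difference is that you \emph{reprove} the connectedness of $\mathbb{D}_p$ from scratch -- smoothness of the level set $\{|p|=1\}$ at regular values, non-nesting of the Jordan curves via the maximum principle, and a Riemann--Hurwitz count showing the number of components is $1$ -- which is essentially the content of the cited result of Ebenfelt--Khavinson--Shapiro. What your approach buys is self-containedness and a conceptually cleaner derivation of injectivity (degree one forces biholomorphism); what it costs is reliance on heavier machinery (degree theory for proper maps, Riemann--Hurwitz for bordered domains) where the paper gets by with elementary multiplicity counting and a citation. A couple of steps you state without proof -- that $p|_\Omega:\Omega\to\mathbb{D}$ is proper, and that the components of $\mathbb{D}_p$ are exactly the interiors of the non-nested Jordan domains -- are true and standard, but would deserve a sentence each in a polished write-up.
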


\begin{proof}
First note that since all the critical values of $p$ lie in $\mathbb{D}$, the set $\mathbb{D}_p$ is connected (see e.g. \cite[Proposition 2.1]{EKS}.) Also, we have $p \circ \varphi(\mathbb{D}) = B(\mathbb{D}) \subset \mathbb{D}$, from which we deduce that $\varphi(\mathbb{D}) \subset \mathbb{D}_p$. In order to show that $\varphi(\mathbb{D})=\mathbb{D}_p$, it suffices to prove that $\varphi(\mathbb{D})$ is both open and closed in $\mathbb{D}_p$, by connectedness. Clearly $\varphi(\mathbb{D})$ is open in $\mathbb{D}_p$, by the open mapping theorem. Now, suppose that $\{w_n\}\subset\varphi(\mathbb{D})$ is such that $w_n \to w \in \mathbb{D}_p$. Then $|p(w)|<1$, so $|p(w_n)| \not\to 1$, by continuity. For each $n$, let $z_n \in \mathbb{D}$ be chosen such that $w_n=\varphi(z_n)$. Passing to a subsequence if necessary, assume that $z_n \to z \in \overline{\mathbb{D}}$. Note that $$|B(z)|=\lim_{n \to \infty} |B(z_n)|= \lim_{n \to \infty} |p(\varphi(z_n))| = \lim_{n \to \infty} |p(w_n)|<1,$$ which shows that $|B(z)|<1$ and thus $|z|<1$. By continuity, we get $w=\varphi(z)$, so that $w \in \varphi(\mathbb{D})$. This shows that $\varphi(\mathbb{D})$ is closed in $\mathbb{D}_p$, so that $\varphi(\mathbb{D})=\mathbb{D}_p$.

Suppose now that $\varphi$ fails to be injective. Then for some distinct points $z_0,z_1\in\mathbb{D}$, $\varphi(z_0)=\varphi(z_1)$. Let $w \in \mathbb{D}$ denote the image under $p$ of this common value, so that $w=p(\varphi(z_0))=p(\varphi(z_1))$. Let $\zeta_1,\zeta_2,\ldots,\zeta_k$ denote the preimages of $w$ under $p$, ordered so that $\varphi(z_1)=\zeta_1$.  Since $\varphi(\mathbb{D})=\mathbb{D}_p$, for each $2\leq j\leq k$, there exists $z_j\in\mathbb{D}$ such that $\varphi(z_j)=\zeta_j$. Note that then $B(z_j)=p(\varphi(z_j))=p(\zeta_j)=w$ for each $0 \leq j \leq k$.

For any function $f$ which is analytic at a point $\xi_0\in\mathbb{C}$, we denote by $\mult_f(\xi_0)$ the multiplicity of $\xi_0$ as a solution to the equation $f(z)=f(\xi_0)$ (note of course that $\mult_f(\xi_0)\geq1$). We shall use the well-known fact that multiplicity is multiplicative : if $f$ is analytic at $\xi_0$, and $g$ is analytic at $f(\xi_0)$, then $\mult_{g\circ f}(\xi_0)=\mult_f(\xi_0)\cdot\mult_g(f(\xi_0))$. In particular, if $h=g\circ f$, then $\mult_{h}(\xi_0)\geq\mult_g(f(\xi_0))$.

Now, note that $\deg(B)=\ds\sum_{z\in B^{-1}(w)}\mult_B(z)\geq\sum_{j=0}^k\mult_B(z_j)$.  Writing $B=p\circ\varphi$ therefore yields $$\deg(B)\geq\ds\sum_{j=0}^k\mult_{p\circ\varphi}(z_j)=\ds\sum_{j=0}^k\mult_{\varphi}(z_j)\mult_p(\varphi(z_j))>\sum_{j=1}^k\mult_p(\zeta_j)=\deg(p).$$  This contradicts the assumption that $\deg(p)=\deg(B)$. It follows that $\varphi$ is injective, which completes the proof of the lemma.

\end{proof}

\begin{lemma}\label{lem: varphi is injective analytic.}
For any $c\in\mathbb{D}$, the map $\varphi(z)=\dfrac{z}{\sqrt[n]{1-\bar{c}^nz^n}}$ is an injective analytic map on $\mathbb{D}$.
\end{lemma}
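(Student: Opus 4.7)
The plan is to split the claim into analyticity and injectivity, and handle each with a single observation.

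For analyticity, I would note that for $z\in\mathbb{D}$ and $c\in\mathbb{D}$ we have $|\bar{c}^nz^n|<1$, so the values of $1-\bar{c}^nz^n$ lie in the disk $\{w:|w-1|<1\}$, which is contained in the right half-plane $\{\operatorname{Re}(w)>0\}$. The principal branch of the $n$-th root is single-valued and holomorphic on this half-plane, so fixing this branch in the definition of $\varphi$ yields a well-defined analytic function on all of $\mathbb{D}$.

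For injectivity, suppose $\varphi(z_1)=\varphi(z_2)$ for some $z_1,z_2\in\mathbb{D}$. Raising both sides to the $n$-th power eliminates the root and gives
\[
\frac{z_1^n}{1-\bar{c}^nz_1^n}=\frac{z_2^n}{1-\bar{c}^nz_2^n}.
\]
The M\"obius transformation $T(w)=w/(1-\bar{c}^n w)$ has its pole at $1/\bar{c}^n$, which lies outside $\overline{\mathbb{D}}$, so $T$ is injective on the disk $|w|<1$; hence $z_1^n=z_2^n$. But once $z_1^n=z_2^n$, the arguments $1-\bar{c}^nz_j^n$ of the $n$-th root in the denominators of $\varphi(z_1)$ and $\varphi(z_2)$ are literally the same complex number, and our single chosen branch assigns them the same value. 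The equation $\varphi(z_1)=\varphi(z_2)$ then collapses to $z_1=z_2$, as desired.

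I do not expect any serious obstacle: the argument reduces to the observation that $\varphi^{\,n}$ equals $T$ composed with $z\mapsto z^n$, together with a consistent choice of branch. The one bookkeeping point is to ensure that the $n$-th root is interpreted the same way on both sides of each equation, which is automatic once we commit to the principal branch on the right half-plane at the outset.
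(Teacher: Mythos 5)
Your proof is correct and follows essentially the same route as the paper: analyticity from the non-vanishing of $1-\bar{c}^nz^n$ (you are slightly more explicit about fixing a branch of the $n$-th root), and injectivity by taking $n$-th powers to deduce $z_1^n=z_2^n$ and then observing that the denominators coincide, forcing $z_1=z_2$. The only cosmetic difference is that the paper gets $z_1^n=z_2^n$ by cross-multiplying rather than by citing injectivity of the M\"obius map $w\mapsto w/(1-\bar{c}^nw)$.
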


\begin{proof}

Clearly, the function $\varphi$ is analytic on the unit disk, since $1-\overline{c}^nz^n$ is non-vanishing there.

Now suppose that $z,w \in \mathbb{D}$ are such that $\varphi(z)=\varphi(w)$. Then we have
$$z^n(1-\overline{c}^nw^n) = w^n(1-\overline{c}^nz^n)$$
so that $z^n=w^n$, and thus $z=e^{2\pi i k/n}w$ for some $k \in \{0,1,\dots,n-1\}$. Substituting back into the equation $\varphi(z)=\varphi(w)$ yields $e^{2\pi i k/n}w=w$, so that either $w=0$, in which case $z=0$, or $e^{2\pi i k/n}=1$. In both cases, we get $z=w$. It follows that $\varphi$ is injective.

\end{proof}

\begin{lemma}\label{lem: Automorphism of disk is identity.}
If $B$ is a degree three Blaschke product whose two critical points in $\mathbb{D}$ are distinct, and if $\psi:\mathbb{D}\to\mathbb{D}$ is a disk automorphism satisfying
\begin{equation}
\label{eq1}
B\circ\psi=B
\end{equation}
on $\mathbb{D}$, then $\psi$ is the identity map.
\end{lemma}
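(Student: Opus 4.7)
The plan is to analyze $\psi$ via a fixed-point argument. The first step is to reduce to the case where $\psi$ is either the identity or elliptic with a unique fixed point $z_0 \in \mathbb{D}$. The key observation is that $B\circ\psi = B$ forces $\psi$ to permute every fiber of $B$; over a generic non-critical value the fiber has three distinct points, and since a Möbius transformation of $\mathbb{D}$ is determined by its values at three points, the group $\{\psi:B\circ\psi=B\}$ injects into $S_3$ and is therefore finite. In particular $\psi$ has finite order, so as a Möbius automorphism of $\mathbb{D}$ it is either the identity or elliptic with a single fixed point $z_0 \in \mathbb{D}$.

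Assuming $\psi$ is not the identity and letting $z_0$ be this fixed point, I would differentiate $B\circ\psi = B$ at $z_0$ to obtain $B'(z_0)\psi'(z_0) = B'(z_0)$. If $B'(z_0) \neq 0$, then $\psi'(z_0) = 1$, and a disk automorphism fixing a point with derivative one there must be the identity, contradicting the assumption.

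Hence $z_0$ must be a critical point of $B$ in $\mathbb{D}$; without loss of generality $z_0 = z_1$. The same differentiation applied at $z_2$ yields $B'(\psi(z_2))\psi'(z_2) = B'(z_2) = 0$, and since $\psi'(z_2) \neq 0$ this forces $\psi(z_2)$ to be a critical point of $B$ in $\mathbb{D}$. By the injectivity of $\psi$ together with $\psi(z_1)=z_1$, the only option is $\psi(z_2)=z_2$. But then $\psi$ fixes the two distinct points $z_1, z_2 \in \mathbb{D}$, which is impossible for a non-identity Möbius transformation. This contradiction shows $\psi$ must be the identity.

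The only substantive obstacle is the reduction in the first paragraph; the remaining arguments are brief consequences of the chain rule combined with the classification of disk automorphisms by their fixed points. The hypothesis that the two critical points of $B$ are distinct is used exactly in the final step, precisely to ensure that two distinct fixed points actually arise to produce the contradiction.
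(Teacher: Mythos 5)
Your proof is correct, but it takes a genuinely different route from the paper's. The paper first establishes that $B(z_1)\neq B(z_2)$ by a degree count: if the critical values coincided, $B(z)-B(z_1)$ would vanish to order at least two at each of $z_1$ and $z_2$, which is impossible for a degree three rational function. With that in hand, the chain rule shows $\psi$ permutes $\{z_1,z_2\}$, the swap is excluded because it would force $B(z_1)=B(z_2)$, and Schwarz's lemma (two distinct fixed points in $\mathbb{D}$) finishes. You instead avoid the critical-value claim entirely: you first show $\psi$ has finite order by letting it act on a generic three-point fiber (a Möbius map fixing three points is the identity, so the symmetry group injects into $S_3$), conclude that a non-identity $\psi$ is elliptic with a unique fixed point $z_0\in\mathbb{D}$, and then use the chain rule at $z_0$ to force $\psi'(z_0)=1$ unless $z_0$ is critical --- after which injectivity of $\psi$ pins down $\psi(z_2)=z_2$ and contradicts uniqueness of the elliptic fixed point. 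Your approach costs an extra structural step (the finite-order/elliptic classification) but buys independence from the fact that the two critical values are distinct, which is the one ingredient of the paper's proof that is genuinely special to degree three; the paper's argument is shorter and more self-contained, using only Schwarz's lemma. Both proofs are valid; the only cosmetic quibble is your closing phrase ``impossible for a non-identity Möbius transformation,'' which should be read as ``impossible for a non-identity automorphism of $\mathbb{D}$'' (or simply as contradicting the uniqueness of the elliptic fixed point), since a general Möbius map can of course fix two points.
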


\begin{proof}
It easily follows from Schwarz's lemma that an analytic function from $\mathbb{D}$ into $\mathbb{D}$ with two distinct fixed points is the identity. It thus suffices to find two distinct points in $\mathbb{D}$ which are fixed by $\psi$.

Let $z_1,z_2\in\mathbb{D}$ denote the two distinct critical points of $B$.  We claim that $B(z_1)\neq B(z_2)$. Indeed, if not, then the rational function $B(z)-B(z_1)$ would have zeros of order at least two at both $z_1$ and $z_2$, which is impossible since $B(z)-B(z_1)$ is a degree three rational function. \footnote{The authors would like to thank user mercio from the online mathematics forum \textit{math.stackexchange.com} for suggesting this argument (though in the context of polynomials).}

Now, Equation (\ref{eq1}) combined with the chain rule implies that $\psi$ preserves the set of critical points of $B$. If $\psi(z_1)=z_2$, then again by Equation (\ref{eq1}) we would have $B(z_1)=B(\psi(z_1))=B(z_2)$, a contradiction. It follows that $\psi$ fixes the distinct points $z_1$ and $z_2$, as required.

\end{proof}

\bibliographystyle{amsplain}

\providecommand{\bysame}{\leavevmode\hbox to3em{\hrulefill}\thinspace}
\providecommand{\MR}{\relax\ifhmode\unskip\space\fi MR }
\providecommand{\MRhref}[2]{%
  \href{http://www.ams.org/mathscinet-getitem?mr=#1}{#2}
}
\providecommand{\href}[2]{#2}
\begin{thebibliography}{}

\end{thebibliography}


\begin{thebibliography}{99}

\bibitem{BCN}
A.F. Beardon, T.K. Carne and T.W. Ng,
The critical values of a polynomial,
\textsl{Constr. Approx.},
\textbf{18} (2002),
343--354.

\bibitem{EKS}
P. Ebenfelt, D. Khavinson and H.S. Shapiro,
Two-dimensional shapes and lemniscates,
\textsl{Contemp. Math.},
\textbf{553} (2011),
45--59.

\bibitem{FB}
M. Fortier Bourque,
\textsl{Applications quasiconformes et soudure conforme},
Masters Thesis, Universit\'e Laval,
2010.

\bibitem{L}
G. Lowther,
Conjecture: Every analytic function on the closed disk is conformally a polynomial,
\textsl{Mathematics Stack Exchange},
http://math.stackexchange.com/q/443351.

\bibitem{N1}
S. Neumark,
\textsl{Solution of Cubic and Quartic Equations},
Pergamon, Oxford,
1965.

\bibitem{R1}
T. Richards,
Level curve configurations and conformal equivalence of meromorphic functions,
\textsl{Comput. Methods. Funct. Theory},
\textbf{15} (2015),
323--371.

\bibitem{R2}
T. Richards,
Conformal equivalence of analytic functions on compact sets,
\textsl{Comput. Methods. Funct. Theory},
\textbf{16} (2016),
585--608.

\bibitem{RY}
T. Richards and M. Younsi,
Conformal models and fingerprints of pseudo-lemniscates,
\textsl{Constr. Approx.},
\textbf{45} (2017),
129--141.

\bibitem{S}
D. Speyer,
Conjecture: Every analytic function on the closed disk is conformally a polynomial,
\textsl{Mathematics Stack Exchange},
http://math.stackexchange.com/q/443351.

\bibitem{Y}
M. Younsi,
Shapes, fingerprints and rational lemniscates,
\textsl{Proc. Amer. Math. Soc.},
\textbf{144} (2015),
1087--1093.

\bibitem{ZAK}
S. Zakeri,
On critical points of proper holomorphic maps on the unit disk,
\textsl{Bull. London Math. Soc.},
\textbf{30} (1988),
62--66



\end{thebibliography}

\end{document}